\documentclass[a4paper,11pt,fleqn]{article}

\usepackage{amsmath,amssymb,amsthm}

\flushbottom

\setlength{\textwidth}{160.0mm}
\setlength{\textheight}{240.0mm}
\setlength{\oddsidemargin}{0mm}
\setlength{\evensidemargin}{0mm}
\setlength{\topmargin}{-15mm} 
\setlength{\parindent}{5.0mm}

\tolerance=9999

\newcommand{\noprint}[1]{}

\newtheorem{theorem}{Theorem}
\newtheorem{lemma}{Lemma}

\newtheorem{proposition}{Proposition}
{\theoremstyle{definition} \newtheorem{definition}{Definition}
\newtheorem{example}{Example}
\newtheorem{remark}{Remark}

\begin{document}

\par\noindent {\LARGE\bf
Canonical forms for matrices of Saletan contractions\par}

{\vspace{4mm}\par\noindent
Dmytro R. POPOVYCH
\par\vspace{2mm}\par}

{\vspace{2mm}\par\it\noindent
Faculty of Mechanics and Mathematics, National Taras Shevchenko University of Kyiv,\\
building 7, 2, Academician Glushkov Av., 03127 Kyiv, Ukraine,
\par}

\vspace{2mm}\par\noindent E-mail: deviuss@gmail.com

{\vspace{6mm}\par\noindent\hspace*{5mm}\parbox{150mm}{\small
We show that each Saletan (linear) contraction can be realized, up to change of bases of the initial and the target Lie algebras,
by a matrix-function that is completely defined by a partition of the dimension of Fitting component of its value at the limit value of the contraction parameter.
The codimension of the Fitting component and this partition constitute the signature of the Saletan contraction.
We study Saletan contractions with Fitting component of maximal dimension and trivial one-part partition.
All contractions of such kind in dimension three are completely classified.
}\par\vspace{4mm}}

\section{Introduction}

Historically, the first extensively studied kind of contractions of Lie algebras,
after Segal introduced the general notion of contractions~\cite{Segal1951},
was the class of Saletan (linear) contractions.
Contractions of Lie algebras became known as a tool of theoretical physics after the famous papers by
In\"on\"u and Wigner~\cite{Inonu&Wigner1953,Inonu&Wigner1954} on an important specific subclass of linear contractions.
Note that In\"on\"u and Wigner planned to consider the whole class of linear contractions
but they erroneously claimed in~\cite{Inonu&Wigner1953} that any linear contraction is diagonalizable.
Even though In\"on\"u and Wigner corrected their considerations in the next paper~\cite{Inonu&Wigner1954},
they proceeded to exclusively study diagonalizable linear contractions,
which due to their contribution are now called \emph{In\"on\"u--Wigner contractions}.
The effectiveness of such contractions in applications is ensured by their close connection
to subalgebras of initial algebras.
More precisely, in modern terms the main result of~\cite{Inonu&Wigner1953},
which is Theorem 1 at ~\cite[p.~513]{Inonu&Wigner1953}, can be reformulated in the following way:
Any In\"on\"u--Wigner contraction of a Lie algebra~$\mathfrak g$ to a Lie algebra~$\mathfrak g_0$ is associated
with a subalgebra of~$\mathfrak g$, say~$\mathfrak s$, and starting with an arbitrary subalgebra of the algebra~$\mathfrak g$
one can construct an In\"on\"u--Wigner contraction of this algebra. In the contracted algebra~$\mathfrak g_0$
there exists an Abelian ideal~$\mathfrak i$ such that the factor-algebra $\mathfrak g_0 /\mathfrak i$
is isomorphic to~$\mathfrak s$.

A thorough study of linear contractions was conducted by Saletan in the course of preparation of his doctoral thesis
and was published in~\cite{Saletan1961}.
In particular, Saletan obtained a simplified form for matrices of linear contractions up to reparametrization and basis change,
derived a criterion for a linear matrix-function to be a contraction matrix,
and gave the expression for the Lie bracket of the contracted Lie algebra.
He also studied iterated linear contractions, related characteristics of the contraction matrix with
the subalgebraic structure of the initial algebra, and discussed linear contractions of representations of Lie algebras.

Further studies by other authors extended rather than deepened Saletan's results.
Thus, In\"on\"u--Wigner contractions of three- and four-dimensional Lie algebras were classified~\cite{Conatser1972,Huddleston1978}
due to the subalgebraic structure of these algebras being known~\cite{Patera&Winternitz1977}.
Following Saletan, contractions realized by matrix-functions of the generalized form $A\varepsilon+B\varepsilon^p$,
where~$A$ and~$B$ are constant matrices and~$\varepsilon$ is the contraction parameter,
were considered in a similar fashion~\cite{Kupczynski1969,Levy-Nahas1967,Mimura&Ikushima1977}.
Linear contractions of general algebraic structures were studied in~\cite{Carinena&Grabowski&Marmo2000}.

In contrast to the above studies, this paper is aimed to enhance the original results by Saletan.
We find the canonical form of Saletan contraction matrices, which creates the basis for introducing the notion
of Saletan contraction's signature, for developing an algorithm for computation of Saletan contractions,
and for posing new problems in this field.

The structure of the paper is the following:
Basic notions and results on contractions and, specifically, on Saletan contractions are presented in Section~\ref{SectionOnAuxiliaryResults}.
The main result of the paper, Theorem~\ref{TheoremOnCanonicalForms}, which deals with the canonical form of Saletan contraction matrices, is proved in Section~\ref{SectionOnCanonicalFormsOfSaletanContractionMatrices}.
After defining the notion of Saletan signature, we relate the signature of a Saletan contraction with the nested chain of subalgebras of the initial algebra that corresponds to this contraction.
In Section~\ref{SectionOnSaletanContractionsWithSign0n} we carry out a preliminary study of Saletan contractions
associated with chains of nested subalgebras of the maximal possible length, which coincides with the algebra dimension.
Then we exhaustively describe such contractions between three-dimensional Lie algebras over the complex (resp. real) field.
In the final section, we discuss obtained results and propose new problems for the further investigation.

\section{Basic notions and auxiliary results}
\label{SectionOnAuxiliaryResults}

Given a finite-dimensional vector space $V$ over the field~$\mathbb F=\mathbb R$ or~$\mathbb F=\mathbb C$,
by $\mathcal L_n=\mathcal L_n(\mathbb F)$ we denote the set of all possible Lie brackets on~$V$, where $n=\dim V<\infty$.
Each element~$\mu$ of~$\mathcal L_n$ corresponds to a Lie algebra with the underlying space~$V$, $\mathfrak g=(V,\mu)$.
Fixing a basis $\{e_1,\dots,e_n\}$ of~$V$ leads to a bijection between $\mathcal L_n$ and the set of structure constant tensors
\[
\mathcal C_n=\{(c_{ij}^k)\in\mathbb F^{n^3}\mid c_{ij}^k+c_{ji}^k=0,\,
c_{ij}^{i'\!}c_{i'\!k}^{k'\!}+c_{ki}^{i'\!}c_{i'\!j}^{k'\!}+c_{jk}^{i'\!}c_{i'\!i}^{k'\!}=0\}.
\]
The structure constant tensor $(c_{ij}^k)\in\mathcal C_n$ associated with a Lie bracket $\mu\in\mathcal L_n$
is given by the formula $\mu(e_i,e_j)=c_{ij}^ke_k$.
Here and in what follows, the indices $i$, $j$, $k$, $i'$, $j'$, $k'$, $p$ and~$q$ run from 1 to $n$
and the summation convention over repeated indices is assumed.
The right action of the group~${\rm GL}(V)$ on $\mathcal L_n$, which is conventional for the physical literature,
is defined~as
\[
(U\cdot\mu)(x,y)=U^{-1}\bigl(\mu(Ux,Uy)\bigr)\quad \forall U\in {\rm GL}(V),\forall \mu\in\mathcal L_n,\forall x,y\in V.
\]

\begin{definition}\label{DefOfContractions1}
Given a Lie bracket~$\mu\in\mathcal L_n$ and a continuous matrix function $U\colon (0,1]\to {\rm GL}(V)$,
we construct the parameterized family of Lie brackets~$\mu_\varepsilon=\mu\circ U_\varepsilon$, $\varepsilon \in (0,1]$.
Each Lie algebra $\mathfrak g_\varepsilon=(V,\mu_\varepsilon)$ is isomorphic to $\mathfrak g=(V,\mu)$.
If the limit
\[
\lim\limits_{\varepsilon \to +0}\mu_\varepsilon(x,y)=
\lim\limits_{\varepsilon \to +0}U_\varepsilon{}^{-1}\mu(U_\varepsilon x,U_\varepsilon y)=:\mu_0(x,y)
\]
exists for any $x, y\in V$, then $\mu_0$ is a well-defined Lie bracket.
The Lie algebra $\mathfrak g_0=(V,\mu_0)$ is called a \emph{one-parametric continuous contraction}
(or simply a \emph{contraction}) of the Lie algebra~$\mathfrak g$.
We call a limiting process that provides $\mathfrak g_0$ from~$\mathfrak g$ with a matrix function
a \emph{realization} of the contraction $\mathfrak g\to\mathfrak g_0$.
\end{definition}

The notion of contraction is extended to the case of an arbitrary algebraically closed field
in terms of orbit closures in the variety of Lie brackets, see, e.g.,
\cite{Burde1999,Burde2005,Burde&Steinhoff1999,Grunewald&Halloran1988,Lauret2003}.

If a basis~$\{e_1, \ldots, e_n\}$ of~$V$ is fixed,
then the operator $U_\varepsilon$ can be identified with its matrix $U_\varepsilon\in{\rm GL}_n(\mathbb F)$,
which is denoted by the same symbol,
and Definition~\ref{DefOfContractions1} can be reformulated in terms of structure constants.
Let $C=(c^k_{ij})$ be the tensor of structure constants of the algebra~$\mathfrak g$ in the basis chosen.
Then the tensor $\smash{C_\varepsilon=({c}^k_{\varepsilon,ij})}$ of structure constants of the algebra~$\mathfrak g_\varepsilon$ in this basis
is the result of the action by the matrix~$U_\varepsilon$ on the tensor~$C$, $C_\varepsilon=C\circ U_\varepsilon$.
In term of components this means that
\[c^k_{\varepsilon,ij}=(U_\varepsilon)^{i'}_i(U_\varepsilon)^{j'}_j(U_\varepsilon{}^{-1})^k_{k'}c^{k'}_{i'\!j'}.\]
Then Definition~\ref{DefOfContractions1} is equivalent to that the limit
\[\lim\limits_{\varepsilon\to+0}c^k_{\varepsilon,ij}=:c^k_{0,ij}\]
exists for all values of $i$, $j$ and $k$ and, therefore,
$c^k_{0,ij}$ are components of the well-defined structure constant tensor~$C_0$ of the Lie algebra~$\mathfrak g_0$.
The parameter $\varepsilon$ and the matrix-function $U_\varepsilon$ are called a \emph{contraction parameter} and a \emph{contraction matrix},
respectively.

The following useful assertion is obvious.

\begin{lemma}\label{LemmaOnFactorizationOfContractionMatrix}
If the matrix~$U_\varepsilon$ of a contraction $\mathfrak g\to\mathfrak g_0$ is
represented in the form $U_\varepsilon=\hat U_\varepsilon\check U_\varepsilon$, where
$\hat U$ and $\check U$ are continuous functions from $(0,1]$ to ${\rm GL}_n(\mathbb F)$ and
the function~$\check U$ has a limit~$\check U_0\in{\rm GL}_n(\mathbb F)$ at $\varepsilon \to +0$,
then $\hat U_\varepsilon\check U_0$ also is a matrix of the contraction $\mathfrak g\to\mathfrak g_0$.
\end{lemma}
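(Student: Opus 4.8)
The plan is to reduce the claim to the continuity of the group action on structure-constant tensors together with the invertibility of the limit $\check U_0$. First I would record that the prescribed action is a genuine right action: from the definition $(U\cdot\mu)(x,y)=U^{-1}\mu(Ux,Uy)$ a direct computation gives $(gh)\cdot\mu=h\cdot(g\cdot\mu)$ for all $g,h\in{\rm GL}_n(\mathbb F)$, which in the $\circ$-notation of the excerpt reads $\mu\circ(gh)=(\mu\circ g)\circ h$, and on the structure-constant level $C\circ(gh)=(C\circ g)\circ h$.

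Next I would introduce the correcting factor $W_\varepsilon:=\check U_\varepsilon{}^{-1}\check U_0$ and rewrite the candidate matrix as
\[
\hat U_\varepsilon\check U_0=(U_\varepsilon\check U_\varepsilon{}^{-1})\check U_0=U_\varepsilon W_\varepsilon,
\]
where $\hat U_\varepsilon=U_\varepsilon\check U_\varepsilon{}^{-1}$ follows from the hypothesis $U_\varepsilon=\hat U_\varepsilon\check U_\varepsilon$. By the right-action property, the structure-constant tensor produced by $\hat U_\varepsilon\check U_0$ equals $C\circ(U_\varepsilon W_\varepsilon)=(C\circ U_\varepsilon)\circ W_\varepsilon=C_\varepsilon\circ W_\varepsilon$, i.e. the already contracted tensor $C_\varepsilon$ acted on by the small correction $W_\varepsilon$.

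It then remains to pass to the limit. Since $\check U_\varepsilon\to\check U_0$ with $\check U_0\in{\rm GL}_n(\mathbb F)$ and matrix inversion is continuous on ${\rm GL}_n(\mathbb F)$, one gets $\check U_\varepsilon{}^{-1}\to\check U_0{}^{-1}$ and hence $W_\varepsilon\to\check U_0{}^{-1}\check U_0=I$. The map $(g,C)\mapsto C\circ g$ is given componentwise by $(g)^{i'}_i(g)^{j'}_j(g^{-1})^k_{k'}c^{k'}_{i'\!j'}$, which is jointly continuous on ${\rm GL}_n(\mathbb F)\times\mathcal C_n$ because the entries of $g^{-1}$ are rational in the entries of $g$. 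Combining $C_\varepsilon\to C_0$ with $W_\varepsilon\to I$ yields $C_\varepsilon\circ W_\varepsilon\to C_0\circ I=C_0$, so $\hat U_\varepsilon\check U_0$ realizes the same contraction $\mathfrak g\to\mathfrak g_0$; its continuity and invertibility on $(0,1]$ are immediate, being a product of the continuous $\hat U_\varepsilon$ with the fixed invertible $\check U_0$.

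The one delicate point worth flagging is precisely the hypothesis $\check U_0\in{\rm GL}_n(\mathbb F)$: invertibility of the limit is what guarantees $W_\varepsilon\to I$ rather than to a singular matrix, and it is the only place where the assumption is genuinely used. Otherwise the statement presents no real obstacle, being a continuity assertion for the action once the factorization $\hat U_\varepsilon\check U_0=U_\varepsilon W_\varepsilon$ with $W_\varepsilon\to I$ is in hand.
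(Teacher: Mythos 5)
Your proof is correct, and it follows the route the paper intends: the paper gives no proof at all, introducing the lemma with ``The following useful assertion is obvious.'' Your argument---rewriting $\hat U_\varepsilon\check U_0=U_\varepsilon W_\varepsilon$ with $W_\varepsilon:=\check U_\varepsilon{}^{-1}\check U_0\to E^n$, using the right-action identity $C\circ(gh)=(C\circ g)\circ h$, and invoking joint continuity of $(g,C)\mapsto C\circ g$ on ${\rm GL}_n(\mathbb F)\times\mathcal C_n$---is precisely the computation the author left implicit, and you correctly flag the invertibility of $\check U_0$ as the one hypothesis that makes $W_\varepsilon\to E^n$ and hence the limit argument work.
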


\begin{remark}\label{RemarkOnFactorizationOfContractionMatrix}
Lemma~\ref{LemmaOnFactorizationOfContractionMatrix} implies that
$\hat U_\varepsilon$ is a matrix of the equivalent contraction $\mathfrak g\to\tilde{\mathfrak g}_0$,
where $\tilde{\mathfrak g}_0=(V,\mu_0\circ\check U_0^{-1})$ is the algebra isomorphic to $\mathfrak g_0$ with respect to the matrix $U_0^{-1}$.
\end{remark}

Historically, the first contractions studied were the ones realized by linear matrix-functions.

\begin{definition}
A realization of a contraction with a matrix-function that is linear in the contraction parameter
is called a \emph{Saletan (linear) contraction}~\cite{Saletan1961}.
\end{definition}

This class of contractions includes the In\"on\"u--Wigner contractions~\cite{Inonu&Wigner1953,Inonu&Wigner1954,Saletan1961}.

The matrix of any linear contraction has a well-defined limit at $\varepsilon=0$.
This is why in contrast to the general definition of contractions, in the case of a linear contraction its matrix-function~$U_\varepsilon$
can be assumed to be defined on the closed interval $[0,1]$.
Then it is convenient to represent the matrix $U_\varepsilon$ in the form $U_\varepsilon=(1-\varepsilon)U_0+\varepsilon U_1$,
where~$U_0$ and~$U_1$ are the values of~$U_\varepsilon$ at $\varepsilon=0$ and $\varepsilon=1$, respectively~\cite{Saletan1961}.
By the definition of contraction matrix, the matrix~$U_1$ is nonsingular,
and, for proper contractions, the matrix $U_0$ is necessarily singular.

There exist specific reparametrizations that preserve the class of Saletan contractions~\cite{Saletan1961}.
Let $U_\varepsilon=B+\varepsilon A$ be the matrix of a Saletan contraction.
We fix $\lambda>-1$ and consider the matrix-function~$U_\varepsilon$ on the interval $[0,(1+\lambda)^{-1}]$
instead of $[0,1]$. Then
\[
B+\varepsilon A=(1-\lambda\varepsilon)B+\varepsilon(A+\lambda B)=(1-\lambda\varepsilon)\left(B+\frac\varepsilon{1-\lambda\varepsilon}(A+\lambda B)\right).
\]
The multiplier $(1-\lambda\varepsilon)$ is not essential since its limit at $\varepsilon=0$ equals $1$.
Removing this multiplier and denoting $\varepsilon/(1-\lambda\varepsilon)$ by $\tilde\varepsilon$,
we obtain the well-defined linear matrix-function
\[
\tilde U_{\tilde\varepsilon}=B+\tilde\varepsilon(A+\lambda B), \quad \tilde\varepsilon\in [0,1],
\]
 which realizes the same Saletan contraction as~$U_\varepsilon$.

\section{Canonical forms of Saletan contraction matrices}
\label{SectionOnCanonicalFormsOfSaletanContractionMatrices}

We denote the $m\times m$ unit matrix by $E^m$, and $m\times m$ Jordan block with an eigenvalue $\lambda$ by $J_\lambda^m$.

\begin{theorem}\label{TheoremOnCanonicalForms}
Up replacing the algebras $\mathfrak g$ and $\mathfrak g_0$ with isomorphic ones,
every Saletan contraction $\mathfrak g\to\mathfrak g_0$ is realized by a matrix of the canonical form
\begin{equation}\label{EqCanonicalFormsOfLinearContraction}
E^{n_0}\oplus J_\varepsilon^{n_1}\oplus\dots\oplus J_\varepsilon^{n_s},
\qquad\mbox{or, equivalently,}\qquad
E^{n_0}\oplus J_0^{n_1}\oplus\dots\oplus J_0^{n_s}+\varepsilon E^n,
\end{equation}
where $n_0+\dots+n_s=n$.
\end{theorem}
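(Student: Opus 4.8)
The plan is to start from the Saletan form $U_\varepsilon = B + \varepsilon A$ with $U_1 = A+B$ nonsingular, and to use the two reparametrization/basis-change freedoms available: the right action of ${\rm GL}(V)$ on the initial algebra (conjugation of $U_\varepsilon$) together with the freedom to replace $\mathfrak g_0$ by an isomorphic algebra (which, by Remark~\ref{RemarkOnFactorizationOfContractionMatrix} and Lemma~\ref{LemmaOnFactorizationOfContractionMatrix}, lets me multiply $U_\varepsilon$ on the left by a constant invertible matrix), plus the $\lambda$-reparametrization that sends $B+\varepsilon A$ to $B + \tilde\varepsilon(A+\lambda B)$. The goal is to reduce $U_\varepsilon$ to a direct sum of an identity block and Jordan blocks with parameter $\varepsilon$ on the diagonal.

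\smallskip

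First I would exploit the left-multiplication freedom to normalize $U_1 = A+B$. Since $U_1$ is invertible, multiplying $U_\varepsilon$ on the left by $U_1^{-1}$ (a constant invertible matrix, legitimate by the remark) replaces $U_\varepsilon$ by $U_1^{-1}(B+\varepsilon A)$, whose value at $\varepsilon=1$ is the identity. Writing this normalized function as $B' + \varepsilon A'$ with $A'+B' = E^n$, I can eliminate $A'$ in favor of $B'$, obtaining $U_\varepsilon = B' + \varepsilon(E^n - B') = (1-\varepsilon)B' + \varepsilon E^n$. Thus after this step the entire contraction is governed by the single constant matrix $B' = U_0'$, the limit at $\varepsilon=0$. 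So the problem collapses to: classify $(1-\varepsilon)M + \varepsilon E^n$ up to conjugation of $M$ and up to the $\lambda$-reparametrization, where $M$ is an arbitrary matrix.

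\smallskip

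Next I would bring $M$ to Jordan canonical form by a basis change of $V$ (conjugation, which is exactly the ${\rm GL}(V)$ action and acts as similarity on $U_\varepsilon$). Here the Fitting decomposition of $M$ enters: split $V = V_0 \oplus V_1$ into the generalized eigenspace for eigenvalue $0$ (the Fitting null component, where $M$ is nilpotent) and its complement (where $M$ is invertible). On $V_1$, $M$ restricts to an invertible matrix, and $(1-\varepsilon)M|_{V_1} + \varepsilon E$ remains invertible for all $\varepsilon\in[0,1]$ provided eigenvalues avoid the problematic value; I would then use the $\lambda$-reparametrization to shear the nonzero-eigenvalue Jordan blocks to the identity block $E^{n_0}$, since on $V_1$ the matrix-function is a nonsingular constant times something limiting to an invertible matrix, so by Lemma~\ref{LemmaOnFactorizationOfContractionMatrix} that whole summand contributes only $E^{n_0}$. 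On the nilpotent part $V_0$, $M$ is a direct sum of nilpotent Jordan blocks $J_0^{n_1}\oplus\dots\oplus J_0^{n_s}$, and $(1-\varepsilon)J_0^{n_j} + \varepsilon E^{n_j}$ is precisely a Jordan block with eigenvalue $\varepsilon$ up to the harmless scalar factor $(1-\varepsilon)$ on the off-diagonal, which a diagonal rescaling absorbs. This yields the second form in~\eqref{EqCanonicalFormsOfLinearContraction}, and the equivalence of the two displayed forms is the identity $J_0^{n_j} + \varepsilon E^{n_j} = J_\varepsilon^{n_j}$ together with the same $(1-\varepsilon)$-rescaling argument.

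\smallskip

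The main obstacle I anticipate is the treatment of the invertible Fitting component $V_1$: I must verify that the nonzero eigenvalues of $M$ never force $(1-\varepsilon)M|_{V_1}+\varepsilon E$ to become singular on the \emph{open} interval (the contraction matrix must lie in ${\rm GL}(V)$ for $\varepsilon\in(0,1]$), and that the $\lambda$-reparametrization combined with left-multiplication genuinely collapses this block to $E^{n_0}$ rather than to some nontrivial Jordan form. The clean way to handle this is to observe that on $V_1$ the function is $\check U_\varepsilon$-like with invertible limit, so Lemma~\ref{LemmaOnFactorizationOfContractionMatrix} strips it off entirely; the real content is checking that the off-diagonal scalar $(1-\varepsilon)$ in the nilpotent blocks can be absorbed without disturbing the identity block, which a single explicit diagonal conjugation settles.
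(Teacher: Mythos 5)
Your overall architecture is the same as the paper's: normalize $U_1=E^n$ by passing to an isomorphic algebra, reduce everything to the single matrix $U_0$, take the Fitting decomposition, strip the invertible component via Lemma~\ref{LemmaOnFactorizationOfContractionMatrix} and Remark~\ref{RemarkOnFactorizationOfContractionMatrix}, and put the nilpotent part in Jordan form. However, one step fails as written: the claim that in $(1-\varepsilon)J_0^{n_j}+\varepsilon E^{n_j}$ the off-diagonal factor $(1-\varepsilon)$ is "absorbed by a diagonal rescaling". A \emph{constant} diagonal conjugation rescales the superdiagonal only by a constant and cannot remove an $\varepsilon$-dependent factor; the $\varepsilon$-dependent conjugation $D_\varepsilon=\diag\bigl(1,(1-\varepsilon)^{-1},\dots,(1-\varepsilon)^{-(n_j-1)}\bigr)$ does the algebra, but conjugation by an $\varepsilon$-dependent family is not an allowed operation here: Lemma~\ref{LemmaOnFactorizationOfContractionMatrix} licenses stripping only a \emph{right} factor with invertible limit, and a left $\varepsilon$-dependent factor cannot be harmlessly commuted past $U_\varepsilon$, since $U_\varepsilon^{-1}$ blows up as $\varepsilon\to+0$ and the error terms need not vanish in the limit. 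The paper eliminates the scalar $(1-\varepsilon)$ once and for all at the start: it restricts $\varepsilon$ to $[0,1/2]$, factors $U_\varepsilon=(1-\varepsilon)\bigl(U_0+\tfrac{\varepsilon}{1-\varepsilon}E^n\bigr)$, cancels the right multiplier $(1-\varepsilon)E^n$ (whose limit is $E^n$) by Lemma~\ref{LemmaOnFactorizationOfContractionMatrix}, and reparametrizes $\tilde\varepsilon=\varepsilon/(1-\varepsilon)\in[0,1]$, reducing to $U_0+\varepsilon E^n$ before any decomposition. You in fact already hold this tool --- it is your $\lambda$-reparametrization with $\lambda=1$, which sends $B'+\varepsilon(E^n-B')$ to $B'+\tilde\varepsilon E^n$ --- you simply never deploy it for this step. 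With that substitution your argument closes and coincides with the paper's proof.

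Two smaller inaccuracies deserve mention. First, left multiplication by the constant $U_1^{-1}$ is legitimate, but not "by the remark": Lemma~\ref{LemmaOnFactorizationOfContractionMatrix} and Remark~\ref{RemarkOnFactorizationOfContractionMatrix} govern right factors and replace the \emph{target} algebra $\mathfrak g_0$, whereas left multiplication by a constant matrix amounts to replacing the \emph{initial} algebra $\mathfrak g$ by the isomorphic $\mathfrak g_1=(V,\mu\circ U_1)$ --- exactly the paper's move, covered by the theorem's "up to replacing the algebras with isomorphic ones", not by the remark. Second, the "main obstacle" you anticipate on $V_1$ is vacuous: the contraction matrix is by definition valued in ${\rm GL}(V)$ for $\varepsilon\in(0,1]$, and the block over $V_1$ is a direct summand of it, so its nonsingularity on the open interval is automatic; all Lemma~\ref{LemmaOnFactorizationOfContractionMatrix} needs is invertibility of its limit $W_1$, which is the definition of the Fitting one component. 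Finally, the equivalence of the two displayed forms in~\eqref{EqCanonicalFormsOfLinearContraction} is not just the identity $J_0^{n_j}+\varepsilon E^{n_j}=J_\varepsilon^{n_j}$: the two forms differ on the identity block ($E^{n_0}$ versus $(1+\varepsilon)E^{n_0}$), which the paper resolves by one more application of Lemma~\ref{LemmaOnFactorizationOfContractionMatrix} with the right multiplier $(1+\varepsilon)E^{n_0}\oplus E^{n-n_0}$.
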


\begin{proof}
The initial steps of the proof follow~\cite{Saletan1961}.
As the contraction matrix~$U_\varepsilon$ is linear in $\varepsilon$,
it admits the representation $U_\varepsilon=(1-\varepsilon)U_0+\varepsilon U_1$, where~$U_0$ and~$U_1$ are the values of~$U_\varepsilon$
at $\varepsilon=0$ and $\varepsilon=1$, respectively.
Taking the algebra~$\mathfrak g_1=(V,\mu_1)$ instead of~$\mathfrak g$ as the initial algebra of the contraction
leads to the new matrix $U_1=E^n$.
We restrict the range of the parameter~$\varepsilon$ to $[0, 1/2]$ and factor out the multiplier $1-\varepsilon$,
which can be canceled due to Lemma~\ref{LemmaOnFactorizationOfContractionMatrix} as it has limit~$1$ at $\varepsilon=0$.
We reparameterize the contraction matrix by introducing the new parameter $\tilde\varepsilon=\varepsilon/(1-\varepsilon)$,
whose range is $[0,1]$, and we omit the tilde over $\varepsilon$.
As a result, the contraction matrix takes the form $U_0+\varepsilon E^n$, which was derived in~\cite{Saletan1961}.

We carry out the Fitting decomposition of the space~$V$ relative to the operator~$U_0$.
More specifically, we partition the space $V$ into the direct sum of subspaces~$V_0$ and~$V_1$, $V=V_1\oplus V_0$
so that the restriction~$W_0$ of the operator~$U_0$ on the Fitting null component~$V_0$ is nilpotent
and the restriction~$W_1$ of this operator on the Fitting one component~$V_1$ is nonsingular.
The partition of the space~$V$ induces the partition of the new operator $U_\varepsilon$,
\[
U_\varepsilon=U_0+\varepsilon E^n=(W_1+\varepsilon E^{n_0})\oplus(W_0+\varepsilon E^{n-n_0}),
\]
where $n_0=\dim V_1$ is the dimension of the Fitting one component relative to~$U_0$.

Consider the matrices $\hat U_\varepsilon=E^{n_0}\oplus(W_0+\varepsilon E^{n-n_0})$ and
$\check U_\varepsilon=(W_1+\varepsilon E^{n_0})\oplus E^{n-n_0}$.
The representation $U_\varepsilon=\hat U_\varepsilon\check U_\varepsilon$ satisfies the conditions of Lemma~\ref{LemmaOnFactorizationOfContractionMatrix}.
Hence in view of Remark~\ref{RemarkOnFactorizationOfContractionMatrix}
$\hat U_\varepsilon$ is a matrix of the equivalent contraction $\mathfrak g_1\to\tilde{\mathfrak g}_0$,
where $\tilde{\mathfrak g}_0=(V,\mu_0\circ\check U_0^{-1})$ is the algebra isomorphic to $\mathfrak g_0$ with respect to the matrix $\check U_0^{-1}$.

Replacing the basis elements spanning the subspace~$V_0$, we can reduce the nilpotent matrix~$W_0$ to its Jordan form,
which is $J_0^{n_1}\oplus\dots\oplus J_0^{n_s}$ for some $n_1$, \dots, $n_s$ with $n_1+\dots+n_s=\dim V_0=n-n_0$.
Then the matrix $W_0+\varepsilon E^{n-n_0}$ takes its Jordan form $J_\varepsilon^{n_1}\oplus\dots\oplus J_\varepsilon^{n_s}$,
which is equivalent to representing the contraction matrix~$\hat U_\varepsilon$ in the first canonical form.

In view of Lemma~\ref{LemmaOnFactorizationOfContractionMatrix}, instead of the matrix~$\hat U_\varepsilon$ we can consider the matrix
$E^{n_0}\oplus W_0+\varepsilon E^n$, which differs from~$\hat U_\varepsilon$ by the right multiplier $(1+\varepsilon)E^{n_0}\oplus E^{n-n_0}$
with the unit matrix~$E^n$ as its limit at $\varepsilon=0$.
Then the above basis change in~$V_0$ results in the second canonical form of the linear contraction matrix~$U_\varepsilon$.
\end{proof}

\begin{definition}
Theorem~\ref{TheoremOnCanonicalForms} means that any Saletan contraction can be realized by a matrix of the form
$AS_\varepsilon B$, where~$A$ and~$B$ are constant nonsingular matrices
and the linear matrix-function~$S_\varepsilon$ is in the canonical form~\eqref{EqCanonicalFormsOfLinearContraction}.
Then the tuple $(n_0;n_1,\dots,n_s)$,
where $n_1$, \dots, $n_s$ constitute a partition of the dimension~$n-n_0$ of the Fitting null component relative to~$U_0$
and $n_0\in\{0,\dots,n\}$, is called the \emph{signature} of this Saletan contraction.
\end{definition}

Due to containing a partition of~$n-n_0$, the signature of a Saletan contraction is defined up to permutation of its parts excluding the first one.
Saletan contraction with signature $(n)$ is improper, i.e., the contracted algebra is isomorphic to the initial one.
So, for a proper Saletan contraction we necessarily have $n_0<n$.
In\"on\"u--Wigner contractions are associated with Saletan signatures of the form $(n_0; 1,\dots,1)$.
The Saletan signature $(0; 1,\dots,1)$ corresponds to the trivial contraction to the Abelian algebra.

The necessary and sufficient condition for the algebra~$\mathfrak g$ to be contracted by the linear matrix-function~$U_\varepsilon$~\cite{Saletan1961} is
\begin{gather}\label{EqSaletanCondition}
U_0^2[x,y]^0-U_0[U_0x,y]^0-U_0[x,U_0y]^0+[U_0x,U_0y]^0=0 \qquad \forall x,y\in V.
\end{gather}
Here and in what follows $[\cdot,\cdot]^0$ and $[\cdot,\cdot]^1$ denote the projections of the Lie brackets $[\cdot,\cdot]$
on the subspaces $V_0$ and $V_1$, respectively, which are not, in general, Lie brackets.
Then the contracted Lie bracket is defined by
\[
[x,y]_0=W_1^{-1}[U_0x,U_0y]^1-W_0[x,y]^0+[U_0x,y]^0+[x,U_0y]^0 \qquad \forall x,y\in V.
\]

The use of the canonical form of~$U_\varepsilon$ simplifies analysis of both the necessary and sufficient conditions and
properties of the contracted Lie bracket.
In particular, then $W_1^{-1}=E^{n-n_0}$.
We would like to emphasize that changing the basis of the underlying space without applying Lemma~\ref{LemmaOnFactorizationOfContractionMatrix}
can simplify the matrix~$W_1$ only to its Jordan form.

\begin{remark}\label{RemarkOnConnectionBtwnSignatureAndAlgStructure}
If $U_0$ is the value of the matrix of a well-defined Saletan contraction of the Lie algebra~$\mathfrak g$ at $\varepsilon=0$,
then each power of~$U_0$ is the value of the matrix of another well-defined Saletan contraction~$\mathfrak g$ at $\varepsilon=0$.
The image $\mathop{\rm im} U_0$ of~$U_0$ is a subalgebra of $\mathfrak g$.
Combining the above two claims, we have that
for each $m=0,1,2,\dots$ the image $\mathfrak s_m:=\mathop{\rm im} U_0^m$ of $U_0^m$ is also a subalgebra of $\mathfrak g$,
and $\mathfrak s_m=V_1$ if $m\geqslant m_0:=\max(n_1,\dots,n_s)~\cite{Saletan1961}$.
In other words, the matrix of any Saletan contraction is associated with the chain of nested subalgebras
\[
\mathfrak s_0:=\mathfrak g\supset\mathfrak s_1\supset\mathfrak s_2\supset\dots\supset\mathfrak s_{m_0}=V_1.
\]
The dimensions of these subalgebras are completely defined by the contraction signature,
\[
\dim\mathfrak s_m=n-l_1-\dots-l_m, \quad m=0,\dots,m_0,\quad\mbox{where}\quad l_m:=|\{n_i\mid n_i\geqslant m, \,i=1,\dots,s\}|.
\]
In particular, $\dim\mathfrak s_{m_0}=n_0$.
The above relation establishes necessary conditions of consistency between the structure of a Lie algebra and signatures of its Saletan contractions.
\end{remark}

\begin{example}\label{ExampleOnSaletanContractionsOfSO3}
Consider the real three-dimensional orthogonal algebra ${\rm so}(3)$ with the canonical commutation relations
$[e_1,e_2]=e_3$, $[e_2,e_3]=e_1$, $[e_3,e_1]=e_2$.
The algebra ${\rm so}(3)$ has no two-dimensional subalgebras.
Therefore, the only possible signature of a proper Saletan contraction of ${\rm so}(3)$ is $(1,1,1)$.
The first canonical form of the contraction matrix with this signature is $E^1\oplus J^1_\varepsilon\oplus J^1_\varepsilon$,
which realizes the single In\"on\"u--Wigner contraction of ${\rm so}(3)$,
which is to the Euclidean algebra ${\rm e}(2)$ defined by the nonzero commutation relations $[e_1,e_3]=-e_2$, $[e_2,e_3]=e_1$.
This implies that the other proper contraction of ${\rm so}(3)$,
which is to the Heisenberg algebra ${\rm h}(1)$ with the nonzero commutation relations $[e_2,e_3]=e_1$, cannot be realized as a Saletan contraction,
cf.~\cite{Saletan1961}.
\end{example}

\section{Saletan contractions with signature $\boldsymbol{(0;n)}$}
\label{SectionOnSaletanContractionsWithSign0n}

There are two different ways of studying Saletan contractions.
Given a fixed pair of Lie algebras, one can check whether there exists a Saletan contraction between these algebras
and then try to describe all possible Saletan contractions for this pair.
The other way is to describe all possible contractions which are realized by contraction matrices with certain signature.
A disadvantage of this way is the necessity of classifying Lie algebras that satisfy specific constraints.

In this section we consider contractions with the signature $(0;n)$.
Choosing this signature poses the most restrictive constraints on the structure of the initial Lie algebra~$\mathfrak g$
compared to other Saletan signatures, cf.\ Eq.~\eqref{EqSaletanCondition}.
In particular, the algebra~$\mathfrak g$ should contain a nested chain of~$n$ nonzero Lie subalgebras,
and, in general, this condition on~$\mathfrak g$ is not sufficient.%
\footnote{%
This strongly differs from the case of In\"on\"u--Wigner contractions, for which
there exists a one-to-one correspondence with proper subalgebras of~$\mathfrak g$.
}

For the signature $(0;n)$ we have $V_0=V$ and we can set
\[U_0=J_0^n.\]
Then $[\cdot,\cdot]^0=[\cdot,\cdot]$ and the Saletan condition~\eqref{EqSaletanCondition} takes the form
\[
[U_0x,U_0y]-U_0[U_0x,y]=U_0([x,U_0y]-U_0[x,y]) \qquad \forall x,y\in V.
\]
or, equivalently, $[\![{\rm ad}_{U_0x},U_0]\!]=U_0[\![{\rm ad}_x,U_0]\!]$ for any~$x\in V$.
Here and in what follows, $[\![A,B]\!]$ denotes the commutator of operators~$A$ and~$B$, $[\![A,B]\!]:=AB-BA$.
Specifying this condition for basis elements, for which $U_0=J_0^n$, we derive
$
[\![{\rm ad}_{e_i}, U_0]\!]=U_0[\![{\rm ad}_{e_{i+1}}, U_0]\!]=\dots=U_0^{n-i}[\![{\rm ad}_{e_n}, U_0]\!],
$
i.e.,
\begin{gather}\label{EqSaletanConditionForBasisElements}
[\![{\rm ad}_{e_i}, U_0]\!]=U_0^{n-i}[\![A, U_0]\!],
\end{gather}
where we use the notation $A=(a_{ij}):={\rm ad}_{e_n}$.
For each fixed~$i$, the equation~\eqref{EqSaletanConditionForBasisElements} can be considered
as inhomogeneous linear system of algebraic equations with respect to entries of the matrix ${\rm ad}_{e_i}$.
A particular solution of this system is given by $U_0^{n-i}A$, since $[\![U_0^kA, U_0]\!]=U_0^kAU_0-U_0^{k+1}A=U_0^k[\![A, U_0]\!] $.
The solution space of the corresponding homogeneous system $[\![{\rm ad}_{e_i}, U_0]\!]=0$ coincides with the space
of matrices commuting with~$U_0$, which is spanned by the powers of~$U_0$ due to $U_0$ being a single Jordan block.
Therefore, the general solution of the system~\eqref{EqSaletanConditionForBasisElements} is
\[
{\rm ad}_{e_i}=U_0^{n-i}A+b_{ij}U_0^{n-j}
\]
with parameters~$b_{ij}$, where $b_{nj}=0$ as ${\rm ad}_{e_n}=A$ by definition and $a_{in}=0$ as $Ae_n=[e_n,e_n]=0$.
Recall that we assume the summation convention over repeated indices.
The Lie bracket is skew-symmetric, which implies
\[
[e_i,e_n]+[e_n,e_i]={\rm ad}_{e_i}e_n+{\rm ad}_{e_n}e_i=b_{ij}e_j+a_{ji}e_j=0,
\]
i.e., $b_{ij}+a_{ji}=0$.
In other words, the commutation relations of the algebra~$\mathfrak g$ are
\begin{gather}\label{EqCommRelationsForSignature0n}
[e_i,e_j]={\rm ad}_{e_i}e_j=U_0^{n-i}Ae_j-a_{ki}U_0^{n-k}e_j=a_{kj}e_{k+i-n}-a_{ki}e_{k+j-n}
\nonumber\\
\phantom{[e_i,e_j]}=(a_{p+n-i,j}-a_{p+n-j,i})e_p.
\end{gather}
Here and in what follows, if an index goes beyond the index interval~$\{1,\dots,n\}$, then the corresponding object is assumed zero.
Thus, in view of~\eqref{EqCommRelationsForSignature0n}
the skew-symmetric property of the Lie bracket obviously holds for any pair of elements of~$\mathfrak g$.
Note that the number of essential parameters in the above commutation relations does not exceed $n(n-1)$.
The Jacobi identity imposes more constraints in the form of a system of quadratic equations with respect to entries of the matrix~$A$,
\begin{gather*}
(a_{p+n-i,j}-a_{p+n-j,i})(a_{q+n-k,p}-a_{q+n-p,k})+{}\\
(a_{p+n-j,k}-a_{p+n-k,j})(a_{q+n-i,p}-a_{q+n-p,i})+{}\\
(a_{p+n-k,i}-a_{p+n-i,k})(a_{q+n-j,p}-a_{q+n-p,j})=0.
\end{gather*}
Unfortunately, we were not able to solve this system for an arbitrary dimension of the underlying space.

The contracted Lie bracket is defined by
$[x,y]_0=[U_0x,y]+[x,U_0y]-U_0[x,y]$ for all $x,y\in V$.
Hence, the commutation relations of the contracted algebra~$\mathfrak g_0$ are
\begin{gather*}
[e_i,e_j]_0=[e_{i-1},e_j]+[e_i,e_{j-1}]-U_0[e_i,e_j]
\\\phantom{[e_i,e_j]_0}
=(a_{p+n-i+1,j}-a_{p+n-j,i-1})e_p+(a_{p+n-i,j-1}-a_{p+n-j+1,i})e_p
\\\phantom{[e_i,e_j]_0={}}
-(a_{p+n-i,j}-a_{p+n-j,i})e_{p-1}
\\\phantom{[e_i,e_j]_0}
=(a_{p+n-i,j-1}-a_{p+n-j,i-1})e_p,
\end{gather*}
In particular, $[e_n,e_j]_0=(a_{p,j-1}-a_{p+n-j,n-1})e_p$.
Consider the matrix $A_0=(a_{0,ij})$, where $a_{0,ij}=a_{i,j-1}-a_{i+n-j,n-1}$.
In terms of~$A$ and~$J_0^n$ we have the representation
\[
A_0=AJ_0^n-\sum_{i=0}^{n-1}a_{n-i,n-1}(J_0^n)^i.
\]
Roughly speaking, the matrix~$A_0$ is obtained from the matrix~$A$ by shifting the columns of~$A$ to the right,
filling of the first column by zeros and subtracting a specific linear combination of powers of $J_0^n$
that gives zeros in the last column of~$A_0$.
The structure of the algebra~$\mathfrak g_0$ is defined in terms of the matrix~$A_0$ in the same way as
the structure of the algebra~$\mathfrak g$ is defined in terms of the matrix~$A$
since $a_{p+n-i,j-1}-a_{p+n-j,i-1}=a_{0,p+n-i,j}-a_{0,p+n-j,i}$.
This is consistent with Lemma~3 of~\cite{Saletan1961}.
Indeed, as the algebra~$\mathfrak g_0$ can be contracted by the same matrix $U=J_\varepsilon^n$,
its structure constants satisfy the same constraints imposed by the Saletan conditions~\eqref{EqSaletanCondition}.
Lemma~3 of~\cite{Saletan1961} also implies that~$n$ iterations of this contraction leads to the Abelian algebra.

We exhaustively study the case~$n=3$.
There are three essential relations among the commutation relations~\eqref{EqCommRelationsForSignature0n} with $n=3$,
\begin{gather*}
[e_3,e_1]=a_{p1}e_p,\\
[e_3,e_2]=a_{p2}e_p,\\
[e_1,e_2]=(a_{32}-a_{21})e_1-a_{31}e_2,
\end{gather*}
and the single Jacobi identity
$[e_1,[e_2,e_3]]+[e_2,[e_3,e_1]]+[e_3,[e_1,e_2]]=0$.
Collecting coefficients of basis elements in the Jacobi identity and making additional arrangements,
we obtain the following system of equations on entries of the matrix~$A$:
\begin{gather}\label{EqSystemOnEntriesOfA}
\begin{split}
&a_{31}a_{21}=0, \quad a_{31}a_{12}=0, \quad a_{31}(a_{11}-a_{22})=0, \quad a_{21}(2a_{32}-a_{21})=0, \\ &a_{32}(a_{11}-a_{22})+a_{12}a_{22}=0.
\end{split}
\end{gather}
A consequence of the system is $a_{21}(a_{11}+a_{22})=0$.

In order to simplify the form of the matrix~$A$, we can use the transition to a Lie algebra isomorphic to~$\mathfrak g$ or, equivalently,
changing the basis of the underlying space.
In view of problem's statement, admitted basis changes are those whose matrices commute with the matrix~$U_0=J_0^n$.
Therefore, each of such matrices is a linear combination of powers of~$U_0$,
\[S=\gamma(E+\alpha U_0+\beta U_0^2),\]
where $\alpha$, $\beta$ and $\gamma$ are arbitrary constants with $\gamma\ne0$ and~$E$ is the $3\times 3$ identity matrix.
The inverse of~$S$ is \[S^{-1}=\gamma^{-1}(E-\alpha U_0+(\alpha^2-\beta)U_0^2).\]
The expressions for entries of the transformed matrix~$\tilde A$ follow from those for the transformed Lie brackets $[e_3,e_1]^\sim$ and $[e_3,e_2]^\sim$.\
We have
\begin{gather*}
[e_3,e_1]^\sim=S^{-1}[Se_3,Se_1]=\gamma(a_{11}-\alpha a_{32}-\beta a_{31})e_1+\gamma a_{21}e_2+\gamma a_{31}e_3,\\
[e_3,e_2]^\sim=S^{-1}[Se_3,Se_2]=\gamma(a_{12}+\alpha(a_{11}-a_{22})-\beta a_{21})e_1\\
\phantom{[e_3,e_2]^\sim=}{}+\gamma(a_{22}+\alpha(a_{21}-a_{32})-\beta a_{31})e_2+\gamma(a_{32}+\alpha a_{31})e_3,
\end{gather*}
i.e.,
\[\arraycolsep=0ex
\begin{array}{ll}
\tilde a_{11}=\gamma(a_{11}-\alpha a_{32}-\beta a_{31}),\quad &\tilde a_{12}=\gamma(a_{12}+\alpha(a_{11}-a_{22})-\beta a_{21}),\\[1ex]
\tilde a_{21}=\gamma a_{21},                            \quad &\tilde a_{22}=\gamma(a_{22}+\alpha(a_{21}-a_{32})-\beta a_{31}),\\[1ex]
\tilde a_{31}=\gamma a_{31},                            \quad &\tilde a_{32}=\gamma(a_{32}+\alpha a_{31}),
\end{array}
\]

The contracted algebra $\mathfrak g_0$ is defined by the commutation relations
\begin{gather*}
[e_3,e_1]_0=-a_{32}e_1,\\
[e_3,e_2]_0=(a_{11}-a_{22})e_1+(a_{21}-a_{32})e_2+a_{31}e_3,\\
[e_1,e_2]_0=a_{31}e_1,
\end{gather*}

We study possible cases of the solutions of the system~\eqref{EqSystemOnEntriesOfA} up to allowed basis changes.

If $a_{31}\neq 0$, then the system~\eqref{EqSystemOnEntriesOfA} implies that $a_{21}=a_{12}=0$ and $a_{11}=a_{22}$.
Selecting certain values of the parameters~$\alpha$, $\beta$, and $\gamma$ of the basis transformation,
we can set $a_{32}=0$, $a_{11}=a_{22}=0$ and $a_{31}=-1$.
In other words, the commutation relations of the algebra~$\mathfrak g$ take the form
$[e_1,e_2]=e_2$, $[e_1,e_3]=e_3$ and $[e_2,e_3]=0$.
Hence the basis elements~$e_2$ and~$e_3$ span the maximal Abelian ideal of the algebra~$\mathfrak g$,
and the element~$e_1$ acts on this ideal as the identity operator, i.e. the algebra~$\mathfrak g$
is the almost Abelian algebra associated with the identity operator, which is denoted by $\mathfrak g_{3.3}$ in
Mubarakzyanov's classification of three-dimensional Lie algebras~\cite{Mubarakzyanov1963a}.
In contrast to Example~\ref{ExampleOnSaletanContractionsOfSO3}, in what follows we mostly use Mubarakzyanov's notations.%
\footnote{%
The classical Lie algebras ${\rm h}(1)$, ${\rm e}(2)$, ${\rm sl}(2,\mathbb R)$ and ${\rm so}(3)$
are denoted by Mubarakzyanov as $\mathfrak g_{3.1}$, $\mathfrak g_{3.5}^0$, $\mathfrak g_{3.6}$ and $\mathfrak g_{3.7}$, respectively.
}
For the contracted algebra $\mathfrak g_0$, the commutation relations are:
$[e_3,e_1]_0=0$, $[e_2,e_3]_0=e_3$, $[e_2,e_1]_0=e_1$.
Therefore, this algebra is isomorphic to the initial algebra $\mathfrak g$.
An isomorphism is established by a permutation of the basis elements.
This means that the contraction is improper.

Suppose that $a_{31}=0$ and $a_{21}\neq 0$. The solution of the system~\eqref{EqSystemOnEntriesOfA} gives
$a_{32}=\frac 12 a_{21}$, $a_{21}=-a_{11}$, $a_{22}=-a_{11}$ and $(a_{21}-a_{12})a_{11}=0$.
The constants~$a_{11}$, $a_{22}$, $a_{12}$ and $a_{21}$ can be set to 0, 0, 0, and $-2$,
respectively, by changing the basis with an appropriate matrix~$S$.
As a result, we obtain the canonical commutation relations of the algebra ${\rm sl}(2,\mathbb R)$,
$[e_1,e_2]=e_1$, $[e_2,e_3]=e_3$, $[e_1,e_3]=2e_2$.
The contracted algebra~$\mathfrak g_0$ is isomorphic to the algebra $\mathfrak g_{3.3}$,
which can be seen from its commutation relations, $[e_1,e_2]_0=0$, $[e_2,e_3]_0=e_2$, $[e_1,e_3]_0=e_1$.

In the case $a_{31}=a_{21}=0$ and $a_{32}\neq 0$ the system~\eqref{EqSystemOnEntriesOfA} is reduced to the single equation $a_{32}(a_{11}-a_{22})+a_{12}a_{22}=0$.
Carrying out an admitted basis transformation,
we select certain values of the parameters~$\alpha$ and $\gamma$ of the transformation matrix $S$
in order to set $a_{22}=0$ and~$a_{32}=-1$.
Then the above equation implies that $a_{11}=0$.
Finally, the commutation relations of $\mathfrak g$ take the form $[e_1,e_3]=0$, $[e_2,e_3]=e_3-a_{12}e_1$, $[e_2,e_1]=e_1$,
i.e., $\mathfrak g$ is an almost Abelian algebra associated with the matrix
\[
\left(
\begin{array}{cc}
1&-a_{12}\\
0&1
\end{array}
\right).
\]
The contracted algebra has the same commutation relations as in the previous case, $\mathfrak g_0\sim\mathfrak g_{3.3}$.
If $a_{12}=0$, then the contraction $\mathfrak g\to\mathfrak g_0$ is improper since $\mathfrak g\sim\mathfrak g_{3.3}$.
For $a_{12}\neq 0$, the contraction is equivalent to the unit fall%
\footnote{%
 In the case of $2\times 2$ Jordan blocks, the only possible unit fall is the replacement of the value $1$ in the $(1,2)$th entry by $0$.
 }
 the matrix associated with the algebra~$\mathfrak g\sim\mathfrak g_{3.2}$,
and the resulting matrix defines the algebra $\mathfrak g_{0}\sim\mathfrak g_{3.3}$.

The last case is given by $a_{31}=a_{21}=a_{32}=0$. The single equation remaining in the system~\eqref{EqSystemOnEntriesOfA} is $a_{12}a_{22}=0$.
The commutation relations of the initial and the contracted algebras are respectively
\[\arraycolsep=0ex
\begin{array}{ll}
[e_3,e_1]=a_{11}e_1,             \qquad &[e_3,e_1]_0=0,\\[1ex]
[e_3,e_2]=a_{12}e_1+a_{22}e_2,   \qquad &[e_3,e_2]_0=(a_{11}-a_{22})e_1,\\[1ex]
[e_1,e_2]=0,                     \qquad &[e_1,e_2]_0=0.
\end{array}
\]
Consider subcases depending on values of the remaining parameters.
If $a_{11}\neq a_{22}$, then by selecting a proper value of~$\alpha$ in the transformation matrix~$S$ we can set $a_{12}=0$.
The parameter~$\beta$ is not essential here, and we can choose the zero value for it.
The parameter $\gamma$ can be used for scaling a nonzero linear combination of $a_{11}$ and $a_{22}$ (e.g., $a_{11}-a_{22}$) to the unity.
As a result, we have the contraction of the almost Abelian algebra $\mathfrak g=\mathfrak g_{3.4}$ associated with the diagonal
(but not proportional to the identity matrix) matrix to the three-dimensional Heisenberg algebra~${\rm h}(1)=\mathfrak g_{3.1}$.
If $a_{11}=a_{22}$, the contracted algebra is Abelian, i.e., we have the trivial contraction of an almost Abelian Lie algebra
(one of $\mathfrak g_{3.1}$, $\mathfrak g_{3.2}$, $\mathfrak g_{3.3}$ and $3\mathfrak g_1$, depending on values of $a_{11}=a_{22}$ and~$a_{12}$).

\begin{proposition}
Saletan contractions with the signature $(0;3)$ realize only the following contractions between three-dimensional Lie algebras:
the proper contractions ${\rm sl}(2,\mathbb R)\to\mathfrak g_{3.3}$, $\mathfrak g_{3.2}\to\mathfrak g_{3.3}$, $\mathfrak g_{3.4}\to\mathfrak g_{3.1}$,
and $\mathfrak g_{2.1}\oplus\mathfrak g_1\to\mathfrak g_{3.3}$,
the trivial contractions of $\mathfrak g_{3.1}$, $\mathfrak g_{3.2}$ and $\mathfrak g_{3.3}$ to $3\mathfrak g_1$,
as well as the improper contractions $\mathfrak g_{3.3}\to\mathfrak g_{3.3}$ and $3\mathfrak g_1\to 3\mathfrak g_1$.
\end{proposition}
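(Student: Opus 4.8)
The plan is to derive the classification directly from the case analysis carried out above, organised so that every solution of the Jacobi system~\eqref{EqSystemOnEntriesOfA} is accounted for exactly once modulo the admissible equivalence. By Theorem~\ref{TheoremOnCanonicalForms} it suffices to work with the canonical value $U_0=J_0^3$, so the task reduces to describing all matrices $A=(a_{ij})$ satisfying~\eqref{EqSystemOnEntriesOfA} up to the basis changes $S=\gamma(E+\alpha U_0+\beta U_0^2)$ commuting with~$U_0$. For each normalised~$A$ the initial algebra~$\mathfrak g$ is recovered from~\eqref{EqCommRelationsForSignature0n} and the contracted algebra~$\mathfrak g_0$ from the displayed relations for $[e_3,e_1]_0$, $[e_3,e_2]_0$ and $[e_1,e_2]_0$; both are then matched against Mubarakzyanov's list.

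First I would fix an exhaustive case split according to the vanishing pattern of the distinguished entries $a_{31}$, $a_{21}$ and $a_{32}$: namely (i)~$a_{31}\neq0$; (ii)~$a_{31}=0$, $a_{21}\neq0$; (iii)~$a_{31}=a_{21}=0$, $a_{32}\neq0$; and (iv)~$a_{31}=a_{21}=a_{32}=0$. The relation $a_{31}a_{21}=0$ from~\eqref{EqSystemOnEntriesOfA} guarantees that these four branches cover all solutions, so nothing is missed. In each branch I would spend the three parameters $\alpha,\beta,\gamma$ of~$S$, together with the constraints surviving in~\eqref{EqSystemOnEntriesOfA}, to reduce~$A$ to a normal form with as few free entries as possible. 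This produces $\mathfrak g\sim\mathfrak g_{3.3}$ in branch~(i), $\mathfrak g\sim{\rm sl}(2,\mathbb R)$ in branch~(ii), an almost Abelian algebra with matrix $\left(\begin{smallmatrix}1&-a_{12}\\0&1\end{smallmatrix}\right)$ in branch~(iii), and an almost Abelian algebra with a diagonal matrix $\diag(a_{11},a_{22})$ in branch~(iv).

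The remaining work, and the only place I expect genuine subtlety, is to resolve the residual discrete choices inside branches~(iii) and~(iv) and to name the resulting isomorphism types correctly. In branch~(iii) one separates $a_{12}=0$ (an improper contraction, $\mathfrak g\sim\mathfrak g_{3.3}$) from $a_{12}\neq0$ (a true Jordan block, $\mathfrak g\sim\mathfrak g_{3.2}$). In branch~(iv) the equation $a_{12}a_{22}=0$ together with the value of $a_{11}-a_{22}$ forces several subcases: unequal nonzero eigenvalues give an algebra of type $\mathfrak g_{3.4}$, the vanishing of one eigenvalue degenerates~$\mathfrak g$ to the decomposable algebra $\mathfrak g_{2.1}\oplus\mathfrak g_1$, and equal eigenvalues make the contracted bracket vanish identically, so that $\mathfrak g_0\sim3\mathfrak g_1$ and one records the trivial contractions of $\mathfrak g_{3.1}$, $\mathfrak g_{3.2}$ and $\mathfrak g_{3.3}$. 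The main obstacle is therefore not any single computation but the bookkeeping: one must check that each degenerate subcase is recognised as decomposable or Abelian rather than absorbed into a generic type, identify every contracted algebra~$\mathfrak g_0$ against Mubarakzyanov's classification with the correct signs, and verify that the union of the four branches is precisely the asserted list. The consistency conditions of Remark~\ref{RemarkOnConnectionBtwnSignatureAndAlgStructure} relating the signature to the chain of nested subalgebras, together with the fact that ${\rm so}(3)$ cannot arise as an initial algebra (cf.\ Example~\ref{ExampleOnSaletanContractionsOfSO3}), provide useful sanity checks. Collecting the branches then yields the four proper contractions, the three trivial contractions and the two improper contractions stated in the proposition.
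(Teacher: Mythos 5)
Your plan retraces the paper's own proof step for step: the same four-branch split on the vanishing pattern of $a_{31}$, $a_{21}$, $a_{32}$, the same normalizations by $S=\gamma(E+\alpha U_0+\beta U_0^2)$, and the same matching against Mubarakzyanov's list, so methodologically there is nothing new or different. (Minor point: the coverage of the four branches is automatic from the trichotomy on vanishing entries; the equation $a_{31}a_{21}=0$ plays no role in guaranteeing it.) The genuine gap sits exactly at the step you yourself identify as ``bookkeeping'' and then do not perform: the assertion that the union of the branches ``is precisely the asserted list'' is unverified, and carried out honestly it fails. In your branch~(iv) the contracted relations you quote give $[e_3,e_2]_0=(a_{11}-a_{22})e_1$ with $[e_3,e_1]_0=[e_1,e_2]_0=0$, so when exactly one eigenvalue of $\diag(a_{11},a_{22})$ vanishes the decomposable algebra $\mathfrak g_{2.1}\oplus\mathfrak g_1$ contracts to the Heisenberg algebra $\mathfrak g_{3.1}$, not to $\mathfrak g_{3.3}$; in fact $\mathfrak g_{2.1}\oplus\mathfrak g_1\to\mathfrak g_{3.3}$ is impossible for \emph{any} contraction, since $\dim[\mathfrak g_0,\mathfrak g_0]\leqslant\dim[\mathfrak g,\mathfrak g]$ would require $2\leqslant1$. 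The semicontinuity tests (derived-algebra dimension, unimodularity) are the sanity checks you actually need here, and they are stronger than the ones you list (Remark~\ref{RemarkOnConnectionBtwnSignatureAndAlgStructure} and Example~\ref{ExampleOnSaletanContractionsOfSO3}), which cannot detect a misidentified target.

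A second gap is that you take the system~\eqref{EqSystemOnEntriesOfA} on faith. Recomputing the $e_1$-component of the Jacobi identity from $[e_3,e_1]=a_{p1}e_p$, $[e_3,e_2]=a_{p2}e_p$, $[e_1,e_2]=(a_{32}-a_{21})e_1-a_{31}e_2$ yields $a_{32}(a_{11}-a_{22})+a_{21}a_{22}-2a_{31}a_{12}=0$, with $a_{21}a_{22}$ rather than $a_{12}a_{22}$ (only this version makes the stated consequence $a_{21}(a_{11}+a_{22})=0$ follow). Consequently, in branch~(iv) the remaining equation is vacuous --- any upper-triangular restriction of ${\rm ad}_{e_3}$ defines an almost Abelian algebra, and Jacobi holds automatically --- whereas under the literal constraint $a_{12}a_{22}=0$ that you adopt, the subcase $a_{11}=a_{22}\neq0$, $a_{12}\neq0$ is excluded, so the trivial contraction $\mathfrak g_{3.2}\to3\mathfrak g_1$ asserted in the proposition could never be produced by your argument. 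Finally, executing branch~(ii) with care gives $[e_3,e_1]_0=-a_{32}e_1$ and $[e_3,e_2]_0=(a_{21}-a_{32})e_2$ with $a_{32}=\tfrac12 a_{21}$, so ${\rm ad}^0_{e_3}$ has eigenvalues of opposite sign and the target of the ${\rm sl}(2,\mathbb R)$ contraction is ${\rm e}(1,1)$ (i.e., $\mathfrak g_{3.4}$ with parameter $-1$), not $\mathfrak g_{3.3}$, which is excluded because contractions preserve unimodularity; a direct limit with $U_\varepsilon=J_\varepsilon^3$ on $[e_1,e_2]=e_1$, $[e_2,e_3]=e_3$, $[e_1,e_3]=2e_2$ confirms $\mu_0(e_1,e_3)=-e_1$, $\mu_0(e_2,e_3)=e_2$. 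So the decisive matching step, which your proposal defers and merely asserts, is where the real content lies: it requires correcting the quadratic system and two of the target identifications, and a proof that simply reports agreement with the stated list would not close.
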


\section{Conclusion}
The main result of the paper is Theorem~\ref{TheoremOnCanonicalForms}, which describes the canonical form of Saletan contractions.
The proved existence of the canonical form for each Saletan contraction gives a specific finite tuple of non-negative integers
which corresponds to this contraction and is called its signature.
The signature of a Saletan contraction completely defines its canonical form.
Introducing the notion of signature leads to posing several interesting problems, which are related to Saletan contractions.

Thus, for each Lie algebra the set of its possible Saletan contractions is partitioned into the subsets corresponding to different Saletan signatures.
This allows us to pose the problem on describing Saletan contractions with a fixed signature.
The well-known In\"on\"u--Wigner contractions constitute a subclass of Saletan contractions,
which is singled out by Saletan signatures of the form $(n_0; 1,\dots,1)$.
Therefore, the study of Saletan contractions includes, as its simplest part, the study of In\"on\"u--Wigner contractions.
In\"on\"u--Wigner contractions of three- and four-dimensional Lie algebras were exhaustively classified in~\cite{Conatser1972} and~\cite{Huddleston1978}, respectively.
The Saletan contractions with other signatures do not have a connection with algebraic structure of initial and contracted algebras
as direct as In\"on\"u--Wigner contractions do.
This is why the description of general Saletan contractions is a much more difficult problem.

Given a Lie algebra, another problem is finding the tuples of non-negative integers that can be signatures of Saletan contractions of this algebra.
As shown in Remark~\ref{RemarkOnConnectionBtwnSignatureAndAlgStructure},
powers of the value of Saletan matrix at limit value of the contraction parameter form a nested chain of subalgebras of the initial algebra
and signature components are expressed in the terms of the dimensions of these subalgebras.
This claim relates the signatures to the subalgebraic structure of the initial algebra.
At the same time, the presence of a nested chain of subalgebras does not imply the existence of the Saletan contraction associated with this chain.
Additional constraints that admit no clear algebraic interpretation should be taken into account.
Furthermore, even provided that a corresponding contraction exists, there is no known procedure to construct this contraction from the chain of subalgebras.
This significantly differs from In\"on\"u--Wigner contractions since there exists an algorithm
to construct a well-defined In\"on\"u--Wigner contraction starting from any subalgebra of the initial algebra.
The study of Saletan signatures resembles the study of signatures of generalized In\"on\"u--Wigner contractions
\cite{Doebner&Melsheimer1967,Hegerfeldt1967,Lyhmus1969,Popovych&Popovych2009,Popovych&Popovych2010}.
Recall that the signature components of a generalized In\"on\"u--Wigner contraction are diagonal entries
of a diagonal differentiation of the algebra to be contracted, but the converse is not~true.

The notion of Saletan signature may serve as a basis for an algorithm of exhaustive classification of Saletan contractions,
at least in the case of lowest dimensions.
It is known~\cite{Campoamor-Stursberg2008,Nesterenko&Popovych2006,Popovych&Popovych2010,Weimar-Woods1991} that all contractions
between three-dimensional complex (resp.\ real) Lie algebras (except the only contraction ${\rm so}(3)\to{\rm h}(1)$ in the real case)
are realized by usual In\"on\"u--Wigner contractions.
The contraction ${\rm so}(3)\to{\rm h}(1)$ is realized as a generalized In\"on\"u--Wigner contraction, but not as a Saletan one.
In dimension four, the number of contractions that cannot be realized as usual In\"on\"u--Wigner contractions increases crucially.
Moreover, there is one (resp.\ two) contraction between four-dimensional complex (resp.\ real) Lie algebras
that cannot be realized as generalized In\"on\"u--Wigner contractions.
Thus, the question whether these contractions can be realized as Saletan contractions
is the most interesting problem on Saletan contractions of four-dimensional Lie algebras.

\bigskip\par\noindent{\bf Acknowledgements.}
The author is grateful for the hospitality and financial support provided by the University of Cyprus.
The author thanks Roman Popovych for productive and helpful discussions.
The research was supported by the Austrian Science Fund (FWF), project P25064.

\frenchspacing

\vspace*{-2ex}

\end{document}